\theoremstyle{definition}
\newtheorem{theorem}{Theorem}[section]
\newtheorem{corollary}[theorem]{Corollary}
\newtheorem{proposition}[theorem]{Proposition}
\newtheorem{conjecture}[theorem]{Conjecture}
\title{Greedy Galois Games}
\author{Joshua Cooper \thanks{This work was funded in part by NSF grant DMS-1001370.} \\ Aaron Dutle \\ Department of Mathematics \\ University of South Carolina }
\date{\today}
\begin{document}

\maketitle

\begin{abstract} We show that two duelers with similar, lousy shooting skills (a.k.a.~{\em Galois duelers}) will choose to take turns firing in accordance with the famous Thue-Morse sequence if they greedily demand their chances to fire as soon as the other's {\em a priori} probability of winning exceeds their own.  This contrasts with a result from the approximation theory of complex functions that says what more patient duelers would do, if they {\em really} cared about being as fair as possible.  We note a consequent interpretation of the Thue-Morse sequence in terms of certain expansions in fractional bases close to, but greater than, $1$.
\end{abstract}

Two players, Alice and Bob, are in a duel.  They take turns firing at each other.  However, both are {\em Galois}\footnote{Famously, the prodigal algebraist and Republican Radical \'Evariste Galois lost a duel over a lover on May 30, 1832, dying the next day.} duelers, i.e., terrible shots, and equally so.  On the other hand, they are deeply committed to fairness, and therefore they make the following deal.  Before a single firearm is discharged, they draw up a firing sequence, i.e., the sequence of turns they will take, according to the following ``greedy'' rules.  Alice shoots first.  Bob then shoots as many times as he needs to obtain a probability of winning that meets or exceeds the probability that Alice has won so far.  Then Alice shoots again, until her {\em a priori} probability of having won exceeds Bob's.  Bob shoots next following the same rule, and so on until someone finally shuffles off his/her mortal coil.

To illustrate, suppose the duelers' hitting probability is $1/3$.  Alice shoots first, so her probability of winning by then end of round 0 is $1/3$.  Bob's probability of winning so far is zero, so he shoots next. For Bob to win in round 1, Alice has to have missed in round 0, and Bob has to hit. Therefore, Bob's probability of having won by the end of round 1 is $(2/3)(1/3) = 2/9$.  This is still less than $1/3$, so Bob shoots again in round 2. For Bob to win in round 2, he must survive Alice's initial shot, miss in round 1, and hit in round 2.  Hence his probability of winning by the end of round 2 is $(1/3)(2/3)+(1/3)(2/3)^2 = 10/27$.  This is more than Alice's probability of $1/3$ ($=9/27$), so Alice gets to go next.  In round three, Alice adds $(1/3)(2/3)^3$ to her probability of winning, since $1/3$ is the probability she succeeds in round 3, and $(2/3)^3$ is the probability that everyone missed in the previous 3 rounds.  If we define $S_{n, X} = \{i\leq n \mid \text{player } X \text{ shoots in round } i\}$, then the probability of player $X=A$(lice) or $B$(ob) winning in round $n$ is given by
$$
\frac{1}{3}\sum_{i\in S_{n,X}} \left(\frac{2}{3}\right)^i.
$$
The following is a table for $p=1/3,$ showing the probability of success for each player as well as the sequence of shooters.
$$
\begin{array}{c|cc|c}
\text{round}& \mathbb{P}(A) & \mathbb{P}(B) & \text{Shooter} \\
\hline
0& 1/3 & 0 & A \\
1& 1/3 & 2/9 & B \\
2& 1/3 & 10/27 & B \\
3& 35/81 & 10/27 & A \\
4& 35/81 & 106/243 & B \\
5& 347/729 & 106/243 & A \\
6& 347/729 & 1018/2187 & B \\
7& 347/729 & 3182/6561 & B \\
8& 9625/19683 & 3182/6561 & A \\
9& 9625/19683 & 29150/59049 & B \\
10& 87649/177147 & 29150/59049 & A
\end{array}
$$

For arbitrary probability $p$, we can determine the sequence $\{a_i\}_{i=0}^n$ of players inductively. Let $q=1-p$, let $a_n = -1$ mean that Alice shoots in round $n$, and $a_n = 1$ means that Bob shoots in round $n$. Let $A_n$ be the event that Alice wins by round $n$, and define $B_n$ similarly.  Since Alice shoots first, $a_0=-1$. Write
$$
f_n(q) = a_n\left(\sum_{j=0}^n a_jq^j\right).
$$
Then
\begin{align*} f_n(q) & =  a_n\left(\sum_{j=0}^n a_jq^j\right)\\
 & = a_n\left(\sum_{i \in  S_{n, B}} q^i - \sum_{i \in S_{n, A}}q^i\right) \\
 & = \frac{a_n}{p} \cdot \left( \mathbb{P}(B_n) - \mathbb{P}(A_n)\right)
\end{align*}
Since $a_n$ is negative whenever Alice is the shooter, we see that (up to the positive factor $1/p$) the polynomial $f_n(q)$ records the current player's probability of success minus the opposing player's probability of success.  Therefore, the next player is completely determined by the value of $f_n(q)$. Specifically,

\begin{equation} \label{coeff} a_{n+1} = \begin{cases} -a_n  & \text{if } f_n(q)\geq 0 \\
 a_n & \text{otherwise}. \end{cases} \end{equation}

It is easy to see that regardless of the value of $p$, the first three terms of the sequence $\{a_i\}$ are $-1$, $1$, $1$.  To determine the fourth term, we consider $f_2(q) = q^2+q-1$. The unique positive root of this polynomial is $\frac{-1+\sqrt{5}}{2} = \frac{1}{\phi} \approx 0.618$ where $\phi$ is the Golden ratio. Since $f_2(q)$ is increasing after this, we have that for any $q\geq 1/\phi,$ the fourth term of the sequence is $a_3= -1$.

The above is a special case of the following.
\begin{proposition} \label{converges}  For each $n\in \mathbb{N}$, there is an $\epsilon>0$ so that the sequence $\{a_i\}_{i=0}^n$ is the same for all $q\in (1-\epsilon, 1).$ \end{proposition}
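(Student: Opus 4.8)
The plan is to induct on $n$, exploiting the fact that once the prefix $a_0,\dots,a_n$ has been pinned down as a fixed sequence of signs, the quantity $f_n$ is literally a single fixed polynomial in $q$, and a fixed nonzero polynomial changes sign only at its finitely many real roots. The base case is immediate: by the discussion preceding the proposition, the opening terms of $\{a_i\}$ (namely $-1,1,1,\dots$) do not depend on $q$ at all, so for small $n$ one may take any $\epsilon$, e.g.\ $\epsilon=1$.

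For the inductive step, suppose $\epsilon>0$ is such that the prefix $\{a_i\}_{i=0}^{n}$ is the same for every $q\in(1-\epsilon,1)$. On that interval the coefficients appearing in
$$f_n(q)=a_n\sum_{j=0}^{n}a_jq^j$$
are then independent of $q$, so $f_n$ restricted to $(1-\epsilon,1)$ coincides with one fixed polynomial $P$. Next I would observe that $P$ is not the zero polynomial: its constant term is $a_na_0=\pm 1\neq 0$. Hence $P$ has only finitely many real roots, so one may choose $\epsilon'\in(0,\epsilon]$ small enough that $P$ has no root in $(1-\epsilon',1)$. By continuity (the intermediate value theorem), $P$, and therefore $f_n$, has constant sign on $(1-\epsilon',1)$; in particular the inequality $f_n(q)\ge 0$ either holds for every $q\in(1-\epsilon',1)$ or for none of them. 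Applying the recursion \eqref{coeff}, $a_{n+1}$ takes one and the same value throughout $(1-\epsilon',1)$, and since $\epsilon'\le\epsilon$ the whole prefix $\{a_i\}_{i=0}^{n+1}$ is constant on $(1-\epsilon',1)$, completing the induction.

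I do not expect a serious obstacle here; the only two points that need care are (i) recognizing that the inductive hypothesis is exactly what turns $f_n$ into a \emph{fixed} polynomial rather than a $q$-dependent family of them, and (ii) verifying $f_n\not\equiv 0$ so that ``finitely many roots'' is available — both dispatched above via the nonzero constant term. It is worth flagging that the $\epsilon$ this argument produces shrinks at each stage, so one obtains a threshold depending on $n$ and not, by itself, a single interval $(1-\epsilon,1)$ on which the entire infinite sequence is determined; this is why the statement quantifies $n$ first, and it leaves open (to be addressed separately) the identification of the resulting stable sequence with Thue–Morse.
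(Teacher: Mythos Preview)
Your proof is correct and follows essentially the same approach as the paper's: induct on $n$, use the inductive hypothesis to make $f_n$ a fixed polynomial, and shrink the interval to avoid its finitely many roots. The only cosmetic difference is that you certify $f_n\not\equiv 0$ via its constant term $a_na_0=\pm 1$, whereas the paper implicitly uses the leading coefficient $a_na_n=1$ (so $f_n$ genuinely has degree $n$); either observation suffices.
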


\begin{proof} We proceed by induction, noting that the base case is trivial. Assume by induction that for all $q \in (1-\epsilon_0, 1),$ the sequence $\{a_i\}_{i=0}^n$ is the same.  Recall that $a_{n+1}$ is determined by the sign of $f_n(q)$, which is a now a fixed polynomial, since the coefficients are exactly the $a_i$.  Since $f_n$ has degree $n$, it has at most $n$ roots. Thus we can find $\epsilon_1 >0$ so that none of the roots occur in $(1-\epsilon_1, 1).$ Setting $\epsilon = \min \{\epsilon_0, \epsilon_1\},$ we have that $f_n(q)$ does not change sign or become zero for $q\in (1-\epsilon, 1)$. Therefore, $a_{n+1}$ does not depend on $q$ inside this interval, completing the induction, and proving the proposition.\end{proof}

One could continue along the lines above, and for each $n$, attempt to find the threshhold value of $q$ so that the first $n$ terms of the sequence stabilize. Indeed, the authors have done this for some small values of $n$, although none of the threshhold values other than $1/\phi$ appear to be numbers of independent interest.  However, we are willing to conjecture the following.

\begin{conjecture} Let $\{ \alpha_k \}_{k \geq 1}$ denote the sequence of those roots of the $f_n(q)$ lying strictly between $0$ and $1$.  Then $\alpha_k$ is increasing and $\alpha_k = 1 - \Theta(k^{-1/2})$.
\end{conjecture}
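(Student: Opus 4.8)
The plan is to translate everything into analysis of the truncated Thue--Morse polynomials. By the main theorem (together with Proposition~\ref{converges}), for $q$ sufficiently close to $1$ the sequence $\{a_i\}$ is the $\pm1$ Thue--Morse sequence $a_i=-(-1)^{s_2(i)}$, so $f_n(q)=a_n P_n(q)$ with $P_n(q):=\sum_{j=0}^n a_j q^j$; thus the $\alpha_k$ are exactly the zeros in $(0,1)$ of the polynomials $P_n$ (equivalently, the thresholds at which one further coordinate of the greedy sequence freezes to its Thue--Morse value). The structural facts I would use are the generating identity $\sum_{j\ge0}a_j q^j=-\prod_{k\ge0}(1-q^{2^k})$ and, since $a_{2^m+j}=-a_j$ for $0\le j<2^m$, the self-similar recursion
\[
P_{2^m+r}(q)\;=\;-\prod_{k=0}^{m-1}\bigl(1-q^{2^k}\bigr)\;-\;q^{2^m}P_r(q),\qquad 0\le r<2^m,
\]
in which $P_{2^m-1}(q)=-\prod_{k=0}^{m-1}(1-q^{2^k})$ has no zero in $(0,1)$.

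First I would establish that the zeros accumulate only at $1$; this yields monotonicity of $\{\alpha_k\}$ and reduces the asymptotic claim to a counting statement. For $q\le\tfrac12$ one has $P_n(q)\le -1+\tfrac{q}{1-q}\le 0$, so there are no zeros in $(0,\tfrac12]$; and $P_n\to P_\infty:=-\prod_{k\ge0}(1-q^{2^k})$ uniformly on compact subsets of $(0,1)$ with $P_\infty<0$ throughout $(0,1)$, so for each $\delta>0$ there is an $N_0(\delta)$ with $P_n<0$ on $[\tfrac12,1-\delta]$ once $n>N_0(\delta)$. Hence every zero of every $P_n$ in $(0,1-\delta)$ comes from some $n\le N_0(\delta)$, the set $\{\alpha_k\}\cap(0,1-\delta)$ is finite, and ``$\alpha_k=1-\Theta(k^{-1/2})$'' is equivalent to ``$N(\delta):=\#\bigl(\{\alpha_k\}\cap(0,1-\delta)\bigr)=\Theta(\delta^{-2})$''.

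\textbf{The analytic engine} should be the behaviour of $\prod(1-q^{2^k})$ as $q\to1^-$. The key estimate, provable by an Euler--Maclaurin / saddle-point analysis of $\sum_k\log(1-e^{-2^k t})$ (a ``binary partition'' type growth), is
\[
\log\prod_{k=0}^{m-1}\bigl(1-(1-x)^{2^k}\bigr)\;=\;-\frac{(\log x)^2}{2\log 2}\;+\;O\!\bigl(\log\tfrac1x\bigr),
\]
uniformly for $2^{-m}\le x\le\tfrac12$, together with the analogous statement for the infinite product, so that $\log|P_\infty(1-x)|\sim-\tfrac{(\log x)^2}{2\log2}$. Inserting this into the localization step should give $N_0(\delta)=\Theta\!\bigl(\delta^{-1}(\log\tfrac1\delta)^2\bigr)$, and inserting it into the recursion shows that whenever $n=2^m+r$ with $P_r(1)<0$ the polynomial $P_n$ has a zero at $1-\Theta\!\bigl((\log n)^2/n\bigr)$, obtained by balancing $\prod_{k<m}(1-q^{2^k})$ against $q^{2^m}|P_r(q)|$. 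Using monotonicity of $q\mapsto\prod_{k<m}(1-q^{2^k})$ and of $q\mapsto q^{2^m}$ on $(0,1)$, one should then bound, by induction on $m$ through the recursion, the number of zeros of $P_{2^m+r}$ in a subinterval near $1$ in terms of the number of sign changes of $P_r$ there.

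\textbf{The main obstacle} is the two-sided count $N(\delta)=\Theta(\delta^{-2})$. For the lower bound one must produce enough zeros: show, uniformly over all admissible pairs $(m,r)$, that $\prod_{k<m}(1-q^{2^k})+q^{2^m}P_r(q)$ changes sign between two explicitly chosen points near $1$, which in turn needs good control of $P_r(q)$ for $q$ near $1$ and for \emph{every} $r<2^m$ at once --- a secondary induction on the binary digits of $r$, since the crude bound $|P_r(q)-P_r(1)|\le r^2(1-q)$ is far too weak. For the upper bound one needs a clean estimate for the number of real zeros of a truncated Thue--Morse polynomial in an interval approaching $1$; the recursive scheme above is the natural tool, but extracting the exact power $\delta^{-2}$ --- rather than $\delta^{-2+o(1)}$, or the weaker $\delta^{-1+o(1)}$ that a naive count would suggest --- is where I expect a genuinely new idea to be needed, and it would be prudent to verify numerically whether the true rate is $k^{-1/2}$ or $k^{-1/2}$ adjusted by a power of $\log k$.
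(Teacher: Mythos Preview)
The statement you are trying to prove is presented in the paper as a \emph{Conjecture}; the authors write only that they ``are willing to conjecture'' it and immediately move on to other matters. There is no proof in the paper to compare your proposal against.

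As for the proposal itself, it is a plan with openly acknowledged gaps rather than a proof, and two points deserve emphasis. First, your identification of the $\alpha_k$ with the zeros in $(0,1)$ of the Thue--Morse polynomials $P_n$ is only partially justified: the coefficients $a_i$ of $f_n$ depend on $q$ and agree with the Thue--Morse values only for $q$ above the relevant threshold, so one must argue that no ``extra'' roots arise from the $f_n$ attached to smaller $q$ (or else make precise that the paper intends only the stabilization thresholds, which is what you actually analyze). Second, your own heuristic locates the zero of $P_{2^m+r}$ at $1-\Theta\bigl((\log n)^2/n\bigr)$; if each $P_n$ contributes boundedly many zeros, this yields $N(\delta)$ of order $\delta^{-1}(\log\tfrac1\delta)^2$ rather than $\delta^{-2}$, i.e.\ $1-\alpha_k$ of order $(\log k)^2/k$ rather than $k^{-1/2}$. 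Your closing suggestion to check the exponent numerically is therefore apt: as it stands, the sketch is at least as much evidence that the precise form of the conjecture may need a logarithmic correction as it is a route to proving it.
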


We are concerned here with a different question. As $q$ tends to 1 (i.e., $p \rightarrow 0$), what does the sequence of players tend to?  A quick calculation with $q=0.9$ reveals the following 21 turns in the sequence of players.
$$
ABBABAABBAABABBABAABB.
$$
At first glance, this appears to be the same as the famous (Prouhet-)Thue-Morse(-Euwe)\footnote{Prouhet used this sequence in 1851 to solve what is now known as the Prouhet-Tarry-Escott problem, although he did not make the sequence explicit.  Thue introduced it in 1906 to devise cube-free words, and Morse applied it to differential geometry in 1921.  Euwe, not knowing about these previous works, used the sequence in 1929 to show the existence of infinitely long chess games, despite the rule designed to prevent this: any three-fold repetition of a sequence of moves ends the game in a draw.  The reader is directed to the delightful survey \cite{Al98} for more of this sequence's interesting history.} sequence, one definition of which is the sequence of parities of the number of $1$'s in the binary expansions of $n$, $n = 1$, $2$, $\ldots$.  In fact, the sequence above differs only in the last position. This disagreement can be fixed by raising the value of $q$ very slightly (setting $q= 0.902$ is sufficient). That our sequence bears such close resemblance to the Thue-Morse sequence is no coincidence, as evidenced by the following.

\begin{theorem} \label{main} The sequence $\{a_i\}_{i=0}^\infty$ tends to the Thue-Morse sequence (on the alphabet $\{-1,1\}$) as $q \rightarrow 1^-$. \end{theorem}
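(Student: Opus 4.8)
By Proposition~\ref{converges}, for each $n$ the value $a_n$ is constant for $q$ in some interval $(1-\epsilon_n,1)$; I write $a_n^*$ for this limit and $f_n^*(q)=a_n^*\sum_{j=0}^{n}a_j^*q^j$ for the resulting fixed-coefficient polynomial. Since $f_n^*$ has finitely many roots, it has a constant sign on a left-neighborhood of $1$, and rule~\eqref{coeff} passes to the limit as: $a_{n+1}^*=-a_n^*$ if $f_n^*(q)>0$ for $q$ near $1^-$, and $a_{n+1}^*=a_n^*$ otherwise. The theorem is exactly the claim that $a_n^*=t_n$ for all $n$, where $(t_n)_{n\ge0}$ is the Thue--Morse sequence on $\{-1,1\}$ with $t_0=-1$ (so $t_1=t_2=1$, $t_3=-1,\dots$). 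The plan is to prove, by induction on $k$, that the length-$2^k$ prefix of $(a_n^*)$ equals the length-$2^k$ prefix $\mathbf{u}^{(k)}:=(t_0,\dots,t_{2^k-1})$ of Thue--Morse, exploiting the self-similarity $\mathbf{u}^{(k+1)}=\mathbf{u}^{(k)}\,\overline{\mathbf{u}^{(k)}}$, where the bar negates every entry.

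Two polynomial facts about Thue--Morse power sums do the work. Put $P_k(q)=\sum_{j=0}^{2^k-1}t_jq^j$. The self-similarity gives $P_{k+1}(q)=(1-q^{2^k})P_k(q)$, hence $P_k(q)=-\prod_{i=0}^{k-1}(1-q^{2^i})$; in particular $P_k(q)<0$ on $(0,1)$ and $P_k$ vanishes to order exactly $k$ at $q=1$. The second, crucial, fact is that for $0\le m<2^k$ the truncated sum $G_m(q):=\sum_{j=0}^{m}t_jq^j$ vanishes at $q=1$ to order exactly $v_2(m+1)\le k-1$. To see this, write $m+1=2^v c$ with $c$ odd, group the sum into $c$ blocks of length $2^v$, and use $t_{2^v\ell+j}=(-1)^{s_2(\ell)}t_j$ for $0\le j<2^v$; this yields $G_m(q)=\big(\sum_{\ell=0}^{c-1}(-1)^{s_2(\ell)}q^{2^v\ell}\big)P_v(q)$, and the left factor at $q=1$ is a sum of $c$ (an odd number of) terms $\pm1$, hence a nonzero integer, so $G_m$ inherits the order $v$ of $P_v$.

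For the inductive step, assume the length-$2^k$ prefix of $(a_n^*)$ is $\mathbf{u}^{(k)}$, so $\sum_{j=0}^{2^k-1}a_j^*q^j=P_k(q)$. The ``transition'' coefficient $a_{2^k}^*$ is decided by $f_{2^k-1}^*(q)=a_{2^k-1}^*P_k(q)$, whose sign near $1^-$ is that of $-a_{2^k-1}^*=-t_{2^k-1}$; since $s_2(2^k-1)=k$, a short parity check gives $a_{2^k}^*=-t_0=-a_0^*$ in both parities of $k$. Then I run an inner induction on $m=0,1,\dots,2^k-2$ with hypothesis $a_{2^k+j}^*=-a_j^*$ for all $j\le m$: it gives $\sum_{j=0}^{2^k+m}a_j^*q^j=P_k(q)-q^{2^k}G_m(q)$, hence
$$
f_{2^k+m}^*(q)\;=\;a_{2^k+m}^*\big(P_k(q)-q^{2^k}G_m(q)\big)\;=\;q^{2^k}f_m^*(q)\;-\;a_m^*P_k(q).
$$
Here $f_m^*=a_m^*G_m$ vanishes at $1$ to order $v_2(m+1)\le k-1$, while $-a_m^*P_k(q)=a_m^*\prod_{i=0}^{k-1}(1-q^{2^i})$ vanishes to order $k$ and has sign $a_m^*$ near $1^-$; since $q^{2^k}>0$, the lower-order term dominates and $f_{2^k+m}^*$ has the same sign near $1^-$ as $f_m^*$. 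Feeding this into the limiting rule, and using $a_{2^k+m}^*=-a_m^*$, one checks in both cases ($f_m^*>0$ or $f_m^*<0$ near $1^-$) that $a_{2^k+m+1}^*=-a_{m+1}^*$, closing the inner induction. Hence the block $(a_{2^k}^*,\dots,a_{2^{k+1}-1}^*)$ equals $\overline{\mathbf{u}^{(k)}}$, so the length-$2^{k+1}$ prefix of $(a_n^*)$ is $\mathbf{u}^{(k+1)}$, closing the outer induction. For a given $N$ one then takes $k$ with $2^k>N$ and intersects the finitely many intervals $(1-\epsilon_n,1)$, $n\le2^k$, from Proposition~\ref{converges}.

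\textbf{Main obstacle.} The heart of the argument is the exact order-of-vanishing statement for $G_m$ together with the factorization $G_m(q)=\big(\sum_{\ell}(-1)^{s_2(\ell)}q^{2^v\ell}\big)P_v(q)$; once that is in hand, the rest is careful bookkeeping of signs as $q\to1^-$ and of how $v_2(m+1)$ compares with $k$. A secondary point is to apply the ``dominant lower-order term'' reasoning only to finitely many indices at a time, so that a single $\epsilon>0$ works.
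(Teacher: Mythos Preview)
Your argument is correct but follows a genuinely different route from the paper's. The paper runs a single induction on $n$ split by parity: for even $n$ it observes that $f_n(1)=1$, hence $f_n(q)>0$ near $1^-$ and $a_{n+1}=-a_n=t_{n+1}$; for odd $n=2m+1$ it proves the identity $f_{2m+1}(q)=(q-1)\,f_m(q^2)$, so the signs of $f_n$ and $f_m$ near $1^-$ are opposite, and a short chain using $a_{2m}=a_m$, $a_{2m+1}=-a_{2m}$ then gives $a_{n+1}=a_{m+1}=t_{m+1}=t_{n+1}$. You instead double the verified prefix from length $2^k$ to $2^{k+1}$ via an inner induction, powered by the product formula $P_k(q)=-\prod_{i<k}(1-q^{2^i})$ and your exact order-of-vanishing lemma for $G_m$ at $q=1$, which together show that $f_{2^k+m}^*$ and $f_m^*$ share a sign near $1^-$. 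The two approaches are cousins---iterating the paper's factorization unwinds to your $G_m(q)=\big(\sum_{\ell<c}(-1)^{s_2(\ell)}q^{2^v\ell}\big)P_v(q)$---but the paper's proof is shorter and rests on a single clean identity, while yours extracts sharper quantitative information (the precise vanishing order $v_2(m+1)$ of every Thue--Morse partial sum) and makes the block self-similarity and the classical Thue--Morse product explicit. One cosmetic slip: your stated bound $v_2(m+1)\le k-1$ fails at $m=2^k-1$, but your inner induction only invokes it for $m\le 2^k-2$, so nothing is harmed.
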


Our proof will use the following well-known facts about the Thue-Morse sequence, which can found, for example, in \cite{Lo02}.

\begin{proposition} \label{TMrel} The Thue-Morse sequence $\{t_i\}_{i=0}^\infty $ on alphabet $\{-1,1\}$ is defined by the following recurrences.
\begin{align*} t_0 & = -1\\
t_{2i} &= t_i \\
t_{2i+1} &= (-1)t_{2i}.\end{align*} \end{proposition}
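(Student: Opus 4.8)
The plan is to derive the three recurrences directly from the combinatorial definition of the Thue--Morse sequence given earlier in the paper, namely that $t_i$ records the parity of the number of $1$'s in the binary expansion of $i$. Writing $s(i)$ for the number of $1$'s in the base-$2$ representation of $i$, the version on the alphabet $\{-1,1\}$ consistent with $t_0=-1$ is $t_i = -(-1)^{s(i)}$, so that an even digit-sum yields $-1$ and an odd digit-sum yields $+1$. With this closed form in hand the base case is immediate, since $s(0)=0$ gives $t_0 = -(-1)^0 = -1$.

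The heart of the argument is a single observation about how $s$ behaves under the two elementary maps $i\mapsto 2i$ and $i\mapsto 2i+1$. Doubling appends a trailing $0$ to the binary expansion and hence leaves the number of $1$'s unchanged, so $s(2i)=s(i)$; incrementing from $2i$ to $2i+1$ flips that trailing $0$ to a $1$ and touches no other digit, so $s(2i+1)=s(2i)+1=s(i)+1$. Substituting these into $t_i=-(-1)^{s(i)}$ yields $t_{2i}=-(-1)^{s(i)}=t_i$ and $t_{2i+1}=-(-1)^{s(i)+1}=(-1)\,t_{2i}$, which are exactly the two recurrences claimed. To be sure these relations \emph{characterize} the sequence rather than merely being satisfied by it, I would append a short strong-induction remark: $t_0$ is pinned down by the base case, and for $n\geq 1$ exactly one of $n=2i$ or $n=2i+1$ applies with $i=\lfloor n/2\rfloor<n$, so the recurrences determine $t_n$ uniquely from smaller-index values; hence there is a unique sequence obeying all three relations, and the computation above identifies it as the digit-sum sequence. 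A convenient cross-check, which also gives an alternative proof, is the fixed-point-of-a-morphism viewpoint: under the substitution $-1\mapsto(-1,1)$ and $1\mapsto(1,-1)$, the pair $(t_{2i},t_{2i+1})$ is the image of $t_i$, namely $(t_i,-t_i)$, recovering both recurrences simultaneously.

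The only genuinely delicate point here is bookkeeping rather than mathematics. One must fix the sign convention so that $t_0=-1$ (i.e.\ use $t_i=-(-1)^{s(i)}$ rather than $(-1)^{s(i)}$) \emph{before} verifying the recurrences, since the opposite convention satisfies the same two doubling/incrementing relations but with base value $t_0=+1$ and would instead reproduce the complemented sequence $ABBA\ldots$ with $A$ and $B$ interchanged. Apart from pinning down that convention and matching it to the $a_0=-1$ choice used for Alice, every step reduces to an elementary statement about binary expansions, so I expect no real obstacle beyond this initial alignment of notation.
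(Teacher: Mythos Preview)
Your argument is correct. The observation $s(2i)=s(i)$ and $s(2i+1)=s(i)+1$ is exactly what is needed, and the brief strong-induction remark that each $n\geq 1$ is of the form $2i$ or $2i+1$ with $i<n$ cleanly closes the uniqueness claim. The sign-convention caveat is also well placed: since the paper fixes $a_0=-1$ for Alice, one must take $t_i=-(-1)^{s(i)}$ rather than $(-1)^{s(i)}$, and you identify this correctly.

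As for comparison with the paper: there is nothing to compare. The paper does not prove this proposition at all; it is stated as one of the ``well-known facts about the Thue-Morse sequence'' and simply referred to \cite{Lo02}. You have supplied a self-contained derivation from the digit-sum description that the paper itself mentions in passing, so your write-up actually fills a gap the authors chose to leave to the literature. The morphism viewpoint you sketch at the end is essentially the content of the paper's Proposition~\ref{block}, so that cross-check also lines up with what the authors quote.
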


\begin{proposition} \label{block} The sequence $\{ (t_{2i}, t_{2i+1})\}_{i=0}^\infty$ is the Thue-Morse sequence on alphabet $\{(-1,1), (1,-1)\}.$ \end{proposition}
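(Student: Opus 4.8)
The plan is to derive the block structure directly from the defining recurrences recorded in Proposition~\ref{TMrel}, treating those recurrences as the only input. First I would observe that the pair $(t_{2i}, t_{2i+1})$ is completely pinned down by the single symbol $t_i$. Indeed, the recurrence $t_{2i} = t_i$ supplies the first coordinate, and $t_{2i+1} = (-1)t_{2i} = -t_i$ supplies the second, so that
\[
(t_{2i}, t_{2i+1}) = (t_i, -t_i)
\]
for every $i \geq 0$. In particular each block is either $(-1,1)$ (when $t_i = -1$) or $(1,-1)$ (when $t_i = 1$), which already confirms that the pair sequence lives on the intended two-letter alphabet $\{(-1,1),(1,-1)\}$.

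Next I would make precise what is meant by ``the Thue-Morse sequence on the alphabet $\{(-1,1),(1,-1)\}$''. Introduce the coding map $\phi$ from $\{-1,1\}$ onto that alphabet by $\phi(-1) = (-1,1)$ and $\phi(1) = (1,-1)$; applying $\phi$ letter by letter to $\{t_i\}$ produces, by definition, the Thue-Morse sequence on the new alphabet, with the first block $\phi(t_0) = \phi(-1) = (-1,1)$ matching the standard starting letter. The entire content of the proposition then reduces to the single identity $(t_{2i}, t_{2i+1}) = \phi(t_i)$. This is a two-case check against the display above: when $t_i = -1$ the formula $(t_i, -t_i)$ reads $(-1,1) = \phi(-1)$, and when $t_i = 1$ it reads $(1,-1) = \phi(1)$. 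Since both match, the pair sequence is exactly $\{\phi(t_i)\}_{i=0}^\infty$, as required.

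There is no serious obstacle here; the only point demanding attention is the bookkeeping of which binary letter is coded to which block, i.e.\ fixing the convention $\phi(-1) = (-1,1)$, $\phi(1) = (1,-1)$ so that the coded sequence begins correctly. Once that convention is set, the recurrences of Proposition~\ref{TMrel} do all the work, yielding the identity $(t_{2i}, t_{2i+1}) = \phi(t_i)$ term by term and hence the proposition.
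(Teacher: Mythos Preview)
Your argument is correct. The paper itself does not supply a proof of this proposition; it is stated as a well-known fact about the Thue-Morse sequence and referred to \cite{Lo02}. Your derivation from the recurrences of Proposition~\ref{TMrel}---namely $(t_{2i},t_{2i+1}) = (t_i,-t_i) = \phi(t_i)$ under the coding $\phi(-1)=(-1,1)$, $\phi(1)=(1,-1)$---is exactly the standard short justification, so there is nothing to add or correct.
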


We note a simple consequence of Proposition \ref{block} which we will also use.

\begin{corollary} \label{balanced} For any $n\in \mathbb{N},$ we have $\sum_{i=0}^{2n+1} t_i = 0.$ \end{corollary}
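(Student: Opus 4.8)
The plan is to exploit the block structure directly. Partition the index set $\{0, 1, \ldots, 2n+1\}$ into the $n+1$ consecutive pairs $\{2j, 2j+1\}$ for $j = 0, 1, \ldots, n$; this is possible precisely because the upper limit $2n+1$ is odd, so every index is accounted for and no index is left over. By Proposition \ref{block}, each pair $(t_{2j}, t_{2j+1})$ equals either $(-1,1)$ or $(1,-1)$, and in both cases $t_{2j} + t_{2j+1} = 0$. Hence
$$
\sum_{i=0}^{2n+1} t_i = \sum_{j=0}^{n} \bigl( t_{2j} + t_{2j+1} \bigr) = \sum_{j=0}^{n} 0 = 0.
$$

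Equivalently, one may bypass Proposition \ref{block} and invoke only the third recurrence of Proposition \ref{TMrel}, which gives $t_{2j+1} = (-1) t_{2j}$ and hence $t_{2j} + t_{2j+1} = 0$ term by term; the same grouping then finishes the argument. I expect no genuine obstacle here: the one point requiring a moment's care is the bookkeeping that the range $0 \le i \le 2n+1$ splits into exactly $n+1$ complete pairs with nothing left over, which the parity of $2n+1$ guarantees. A routine induction on $n$ (passing from $n-1$ to $n$ by appending the pair $t_{2n} + t_{2n+1} = 0$) would work just as well, but the pairing argument is the most transparent and needs no separate base case.
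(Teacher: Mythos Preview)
Your proof is correct and is exactly the intended argument: the paper does not spell out a proof but simply notes that the corollary is a ``simple consequence of Proposition~\ref{block},'' and your pairing $\{2j,2j+1\}$ with $t_{2j}+t_{2j+1}=0$ is precisely that consequence. The alternative you mention via the recurrence $t_{2i+1}=-t_{2i}$ from Proposition~\ref{TMrel} is equally valid and amounts to the same computation.
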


\begin{proof}[Proof of Theorem \ref{main}]
In light of Proposition \ref{converges}, $q$ can be taken arbitrarily close to 1.  We proceed by induction. We have already shown that the two sequences agree for $i = 0$, $1$, $2$, $3$, so the base cases hold. We assume $n>2$, and by induction that the two sequences agree for all $i\leq n.$\\

\noindent \emph{Case 1:} $n=2m$ is even.
Consider $g(q) = \sum_{i=0}^{n-1} a_iq^i.$ Since the $a_i$ are the Thue-Morse sequence, Corollary \ref{balanced} tells us that $g(1)=0.$ Since $q$ can be taken arbitrarily close to $1$ and $g$ is continuous, we can ensure $-1/2<g(q)<1/2$ for all $q$ under consideration. We may also assume that $q>(1/2)^{1/n}$.  Then note that $f_n(q) = q^n \pm g(q),$ so that for all of our $q$,
$$
f_n(q) = q^n \pm g(q) > 1/2 - 1/2 \geq 0
$$
Thus (\ref{coeff}) gives that $a_{n+1} = (-1)a_{n}.$ Since $n$ is even, induction and the recurrence for Thue-Morse give that $$a_{n+1} = (-1)a_n =  (-1)t_n = t_{n+1}.$$

\noindent \emph{Case 2:} $n=2m+1$ is odd.
Since $n$ is odd, Corollary \ref{balanced} gives that $f_n(1) = 0.$ Hence we can write
$$f_n(q) = (q-1)g(q)$$ for some monic degree $2m$ polynomial $g$.

We claim that $g(q) = f_m(q^2)$. We know by induction that sequence $\{a_i\}$ matches Thue-morse up to $n$,  whence $a_{2i+1} = (-1)a_{2i}$ and $a_{2i} = a_i$ for all of the coefficients in our polynomial.  We can write
\begin{align*} f_n(q) & = a_{2m+1} \left(\sum_{i=0}^{2m+1} a_iq^i\right)\\
& = a_{2m+1} \left(\sum_{i=0}^m a_{2i}q^{2i}+a_{2i+1}q^{2i+1}\right) \\
& = (-1)a_{2m} \left(\sum_{i=0}^m a_{2i}q^{2i}-a_{2i}q^{2i+1}\right) \\
& = (-1)a_{2m} (1-q)\left(\sum_{i=0}^m a_{2i}q^{2i}\right) \\
& = (q-1)a_m\left(\sum_{i=0}^m a_{i}(q^2)^{i}\right) \\
& = (q-1)f_m(q^2), \end{align*} proving the claim.

Now, note that for $q$ in our range, $(q-1)$ is negative, so  one of $f_n(q)$ and $f_m(q)$ is positive, and the other is negative. Therefore, (\ref{coeff}) says that  for some $j\in \{0,1\}$, $a_{n+1} = (-1)^ja_n$ and $a_{m+1} = (-1)^{j+1}a_m.$ Then since we know the Thue-Morse relations hold up to $n$, we have
$$
a_{n+1} = (-1)^ja_n = (-1)^ja_{2m+1} = (-1)^{j+1}a_{2m} = (-1)^{j+1}a_{m} = a_{m+1}.
$$

By the inductive hypothesis and Proposition \ref{TMrel}, $a_{m+1} = t_{m+1} = t_{2m+2} = t_{n+1}$, completing the proof.
\end{proof}

Not all of this gun violence is fun and games.  Indeed, it bears on some serious business in the approximation theory of complex functions.  G\"unt\"urk showed (\cite{Gu05}) that, in a sense, there is an even ``fairer'' sequence than the Thue-Morse sequence, if only the shooters were not so greedy.  Indeed, his much more general result is the following.

\begin{theorem} Let $0 \leq \mu < 1 \leq M < \infty$ be arbitrary and $\mathcal{R}_M = \{z \in \mathbb{C} : |1-z| \leq M(1-|z|)\}$.  There exist constants $C_1 = C_1(\mu,M) > 0$ and $C_2 = C_2(\mu,M) > 0$ such that, for any power series
$$
f(z) = \sum_{n=0}^\infty a_n z^n, \qquad a_n  \in [-\mu,\mu], \; \forall n,
$$
there exists a power series with $\pm 1$ coefficients, i.e.,
$$
Q(z) = \sum_{n=0}^\infty b_n z^n, \qquad b_n \in \{-1,+1\}, \; \forall n,
$$
which satisfies
$$
|f(z) - Q(z)| < C_1 e^{C_2 / |1-z|}
$$
for all $z \in \mathcal{R}_m \setminus \{1\}$.
\end{theorem}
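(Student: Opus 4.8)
\emph{Proof proposal.} Write $Q(z)=\sum_n b_n z^n$ and $E(z)=f(z)-Q(z)=\sum_n (a_n-b_n)z^n$; everything reduces to choosing the signs $b_n\in\{-1,1\}$ so that $E$ is controlled on $\mathcal R_M$. Each coefficient $a_n-b_n$ has modulus in $[1-\mu,1+\mu]$ and the sign of $b_n$, so $E$ is essentially a $\pm1$ power series; in particular the trivial estimate $|E(z)|\le (1+\mu)/(1-|z|)$ holds for \emph{any} choice of signs, and since $\mathcal R_M$ is a Stolz-type region --- on $\mathcal R_M\setminus\{1\}$ we have $|z|<1$ and $1-|z|\ge|1-z|/M$ --- this already gives $|E(z)|\le (1+\mu)M/|1-z|$, hence the bound in the form stated once $C_1,C_2$ are chosen accordingly. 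So the actual content, and what I would aim to prove, is genuine decay of $E$ near $1$. The main tool is summation by parts: if $(c_n)$ has subexponential growth and $c^{(j)}_n=\sum_{m\le n}c^{(j-1)}_m$ denotes its $j$-fold running sum, then inside the disk $\sum_n c_n z^n=(1-z)^j\sum_n c^{(j)}_n z^n$; applied to $c_n=a_n-b_n$, a bound $|c^{(j)}_n|\le\beta_j$ yields $|E(z)|\le M\beta_j\,|1-z|^{j-1}$ on $\mathcal R_M$, so the game is to design $(b_n)$ so that the iterated sums of $(a_n-b_n)$ stay small enough that the order $j$ can be pushed up as $z\to 1$.

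A concrete benchmark is the Thue--Morse sequence itself, approximating $f\equiv 0$: since $\sum_n t_n z^n=\prod_{i\ge 0}(1-z^{2^i})$, the partial product $\prod_{i<k}(1-z^{2^i})$ equals $(1-z)^k G_k(z)$ with $G_k$ having non-negative coefficients summing to $2^{k(k-1)/2}$, and the tail product is bounded by $1/(1-|z|)$, so $|\sum_n t_n z^n|\le M\,2^{k(k-1)/2}\,|1-z|^{k-1}$ for every $k$; optimizing at $k\asymp\log_2(1/|1-z|)$ gives error $\lesssim \exp(-c\log^2(1/|1-z|))$ on $\mathcal R_M$. The plan for general $f$ is a multiscale construction in this spirit: partition $\mathbb N$ into consecutive blocks $I_1,I_2,\dots$ of geometrically growing length and, on the $\ell$-th block, prescribe the signs so that it simultaneously (i) ``tracks'' $f$ --- the average of $b_n$ over the block approximates that of $a_n$ --- and (ii) is ``flat to order $k_\ell$'', meaning the restricted polynomial $\sum_{n\in I_\ell}(a_n-b_n)z^n$ factors as $(1-z)^{k_\ell}$ times a cofactor with explicitly bounded coefficients, where $k_\ell$ grows with $\ell$ (faster, one hopes, than the $\log_2(\mathrm{length})$ that plain Thue--Morse achieves), using generalized Prouhet--Tarry--Escott patterns and/or higher-order $\Sigma\Delta$ rules for the sign selection. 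To estimate $E(z)$ at a fixed $z$, split the series over the blocks, use for $I_\ell$ the factor $|1-z|^{k_\ell}$ from its flatness together with the factor $|z|^{(\mathrm{position})}$ from its location, choose the cut-off scale that balances these two decays, and sum.

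The main obstacle is precisely this construction and its error estimate. The binding difficulty is the constraint $|a_n-b_n|\ge 1-\mu>0$ for \emph{every} $n$: it forces each restricted polynomial to have all coefficients bounded below in modulus, which is exactly what stops a short prefix from being flat to high order and pins plain Thue--Morse at ``order $\sim\log(\mathrm{length})$'', so improving on it requires real cleverness in how the flatness budget is allocated across scales. Simultaneously one must keep the cofactor growth (equivalently, the iterated-sum bounds $\beta_j$) small enough that the gain $|1-z|^{k_\ell}$ is not cancelled, and one must manage the transitions between blocks of increasing order so that the ``transient'' created when the order steps up from $k_\ell$ to $k_{\ell+1}$ is absorbed rather than amplified --- the bookkeeping that is the technical heart of exponentially accurate $\Sigma\Delta$ quantization, and where the hypothesis $\mu<1$ enters, through the per-step steering slack $1-\mu$. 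Everything else --- the geometry of $\mathcal R_M$, the summation-by-parts identity, and the final optimization over the cut-off scale --- is routine once the sign sequence and the requisite coefficient/cofactor bounds are in hand.
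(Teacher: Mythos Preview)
The paper does not actually prove this theorem: it is quoted from G\"unt\"urk's paper \cite{Gu05} and stated without proof, so there is no ``paper's own proof'' to compare against. Your observation that the inequality \emph{as printed} --- with a positive exponent $C_2/|1-z|$ --- already follows from the trivial estimate $|E(z)|\le(1+\mu)/(1-|z|)\le(1+\mu)M/|1-z|$ on $\mathcal R_M$ is correct; the intended statement (and the one in \cite{Gu05}) has $e^{-C_2/|1-z|}$, i.e., genuine exponential decay, and you rightly identify that as the real content.

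For that stronger statement, what you have written is a roadmap, not a proof. The ingredients you list --- the Stolz geometry of $\mathcal R_M$, repeated summation by parts, the Thue--Morse benchmark yielding $\exp(-c\log^2(1/|1-z|))$, and the appeal to higher-order $\Sigma\Delta$ quantization --- are indeed the ones G\"unt\"urk uses, and your diagnosis of where the difficulty lies (producing a sign sequence whose iterated partial sums remain bounded to arbitrarily high order, while controlling the transients as the order is stepped up) is accurate. But you explicitly stop short of supplying the construction or the bounds $\beta_j$, calling this ``the main obstacle'' and deferring to ``real cleverness''; that is precisely the substance of the theorem, so the proposal has a genuine gap at its core. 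A complete argument would have to exhibit the exponentially accurate one-bit $\Sigma\Delta$ family (or an equivalent device) and verify the requisite coefficient estimates, rather than only naming them.
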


Furthermore, this result is best possible in a certain precise sense by a theorem of Borwein-Erd\'elyi-K\'os (\cite{Bo99}).  If one sets $\mu = 0$ and $M=1$, we obtain the corollary that one can approximate the constant $0$ function (of $p$) within $\exp(-c/p)$ for some constant $c$ by a power series with coefficients in $\{-1,+1\}$; the author goes on to show that the Thue-Morse sequence only obtains an approximation of $\exp(-c (\log p)^2)$.  It pays to have patience!

In fact, G\"unt\"urk communicates an observation by Konyagin, who asked the approximation question to begin with: if one takes as $\mathbf{a} = \{a_n\}_{n \geq 0}$ any expansion of $\frac{1}{2p}$ in the fractional base $\frac{1}{q}$, the generating function $g(z)$ of $\mathbf{a}$ satisfies $g(p) = 0$, for any $q \in (0,1/2)$.  An {\em expansion of $x \in \mathbb{R}^+$ in the base $\beta \geq 1$} is any sequence of the form
$$
c_n c_{n-1} \ldots c_1 c_0 . c_{-1} c_{-2} \ldots,
$$
where, for each $k \leq n$, $c_n \in \{0,1,\ldots,\lfloor \beta \rfloor\}$ and
$$
x = \sum_{k=0}^\infty c_{n-k} \beta^{n-k}.
$$
Such expansions were introduced by R\'{e}nyi under the name ``$\beta$-expansions'' (\cite{Re57}).  We may therefore reinterpret Theorem \ref{main} as follows.

\begin{corollary} \label{cor:expansion} Any initial segment of the Thue-Morse sequence, expressed over the alphabet $\{0,1\}$ -- for either choice of assignment of $\{\text{Alice},\text{Bob}\}$ to (distinct elements of) $\{0,1\}$ -- agrees with some expansion of $n/2$ in the base $1 + 1/n$ all of whose digits lie to the right of the radix point, for any sufficiently large $n$.
\end{corollary}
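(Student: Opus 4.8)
The plan is to read the infinite greedy firing sequence itself, renamed over $\{0,1\}$, as the digit string of a base-$(1+\tfrac1n)$ expansion of $n/2$, and then to quote Theorem~\ref{main} for the fact that its leading digits are those of Thue--Morse. First I would fix one of the two bijections $\{\text{Alice},\text{Bob}\}\to\{0,1\}$ and let $d_i\in\{0,1\}$ be the image of $a_i$, so that $d_i=\tfrac12(a_i+1)$ for one choice and $d_i=\tfrac12(1-a_i)$ for the other; either way $(d_0,d_1,d_2,\dots)$ is the greedy sequence rewritten over $\{0,1\}$. I would then set $q=\tfrac{n}{n+1}$, so that $p=1-q=\tfrac1{n+1}$ and $1/q=1+\tfrac1n$. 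The crucial input (addressed below) is that $\sum_{i\ge0}a_iq^i=0$; granting it, for either renaming $\sum_{i\ge0}d_iq^i=\tfrac12\sum_{i\ge0}(\pm a_i+1)q^i=\tfrac12\cdot\tfrac1{1-q}=\tfrac1{2p}=\tfrac{n+1}2$. Since $1\le 1/q<2$ as soon as $n\ge2$, every $d_i$ is a legal digit of base $\beta:=1+\tfrac1n$, so the string $0.d_0d_1d_2\cdots$ is a base-$\beta$ expansion, all of whose digits lie to the right of the radix point, of the number
$$
\sum_{i\ge0}d_i\,\beta^{-(i+1)}=q\sum_{i\ge0}d_iq^i=\frac{n}{n+1}\cdot\frac{n+1}{2}=\frac n2 .
$$

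The step I expect to be the main obstacle is the identity $\sum_{i\ge0}a_iq^i=0$, which should hold for every $q\ge\tfrac12$. I would prove it by controlling the partial sums $P_n=\sum_{i=0}^n a_iq^i$. The greedy rule~(\ref{coeff}) sets $a_{n+1}=-a_n$ exactly when $a_nP_n\ge0$, and a short case check shows that in every case the new term $a_{n+1}q^{n+1}$ has sign opposite to $P_n$ (and magnitude $q^{n+1}$), so that $|P_{n+1}|=\bigl|\,|P_n|-q^{n+1}\bigr|$ for all $n$. Starting from $|P_0|=1=q^0$, the inequality $q^n\le 2q^{n+1}$ --- which holds precisely because $q\ge\tfrac12$ --- drives an induction: if $|P_n|\le q^n$ then $|P_n|\le 2q^{n+1}$, which forces $\bigl|\,|P_n|-q^{n+1}\bigr|\le q^{n+1}$, i.e.\ $|P_{n+1}|\le q^{n+1}$. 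Hence $|P_n|\le q^n\to0$ and $\sum_{i\ge0}a_iq^i=\lim_n P_n=0$. This is the one place where the hypothesis that the duelers are lousy shots --- here in the guise $p\le\tfrac12$, equivalently $q\ge\tfrac12$ --- is genuinely used.

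To assemble the argument: given a finite initial segment of the Thue--Morse sequence, of length $L$, Proposition~\ref{converges} together with Theorem~\ref{main} furnishes an $\epsilon>0$ such that for every $q\in(1-\epsilon,1)$ the greedy sequence agrees with Thue--Morse (on $\{-1,1\}$) in positions $0,\dots,L-1$. For any integer $n\ge2$ with $\tfrac{n}{n+1}>1-\epsilon$ --- that is, for all sufficiently large $n$ --- I would run the construction of the first paragraph with $q=\tfrac{n}{n+1}$: then $d_0,\dots,d_{L-1}$ are precisely the first $L$ symbols of the Thue--Morse sequence over $\{0,1\}$ under the chosen renaming, and $0.d_0d_1d_2\cdots$ is an expansion of $n/2$ in base $1+\tfrac1n$, all of whose digits lie to the right of the radix point, whose length-$L$ initial block is the given segment. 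Handling the two renamings simultaneously then covers ``either choice of assignment'', and the only routine verifications remaining are the elementary computations displayed above and the case analysis behind the sign claim in the second paragraph.
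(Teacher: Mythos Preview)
Your argument is correct and follows the paper's overall strategy: show that the greedy series $\sum_{i\ge 0}a_iq^i$ vanishes, rewrite the $\{0,1\}$ version as a base-$(1+\tfrac1n)$ expansion of $n/2$, and invoke Theorem~\ref{main} (via Proposition~\ref{converges}) for the initial-segment agreement with Thue--Morse. The point of departure is the convergence step. The paper argues probabilistically: Bob's and Alice's running win-probabilities are monotone, their sum tends to $1/p$, and they \emph{switch order infinitely often}, forcing both limits to be $1/(2p)$; the switching claim, however, is asserted rather than proved. Your route is more elementary and self-contained: from the greedy rule you extract the identity $|P_{n+1}|=\bigl|\,|P_n|-q^{n+1}\bigr|$ and, using only $q\ge\tfrac12$, run the clean induction $|P_n|\le q^n$, which gives $\sum a_iq^i=0$ directly and makes explicit where the ``lousy shot'' hypothesis is used. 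You also take $q=n/(n+1)$ (so $p=1/(n+1)$), which lands the base at exactly $1+\tfrac1n$ and makes the final arithmetic $\sum_{i\ge0}d_i\beta^{-(i+1)}=q\cdot\tfrac{n+1}{2}=\tfrac n2$ come out on the nose; the paper's choice $p=1/n$ is off by one here, though harmlessly so.
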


\begin{proof} The quantities $g_n(q) = \frac{1}{p} \sum_{i=0}^{n-1} a_i q^i$ and $\bar{g}_n(q) = \frac{1}{p} \sum_{i=0}^{n-1} \overline{a_i} q^i$ (where $\bar{\cdot}$ denotes boolean negation) represent, respectively, Bob's and Alice's probabilities of having won after $n$ rounds.  Since $g_n(q)$ and $\bar{g}_n(q)$ switch order (i.e., the larger becomes smaller and the smaller larger) infinitely often as $n$ increases, are both monotone nondecreasing in $n$, and
$$
p (g(q) + \bar{g}(q)) = p \sum_{i=0}^{n-1} q^i = p \cdot \frac{1 - q^n}{1 - q} = 1 - q^n,
$$
which tends to $1$ as $n \rightarrow \infty$, we may conclude that
$$
\lim_{n \rightarrow \infty} g_n(q) = \lim_{n \rightarrow \infty} \bar{g}_n(q) = \frac{1}{2p}.
$$
Setting $p = 1/n$ and $q = 1 - 1/n$ gives the stated result, since the sequence of coefficients of $g_n$ and of $\bar{g}_n$ tends to a limit as $n \rightarrow \infty$.
\end{proof}

R\'enyi referred to a greedily-constructed $\beta$-expansion as ``the'' $\beta$-expansion.  It is not difficult to see now that ``the'' $\beta = 1 + 1/n$ expansion of $n/2$ is exactly Alice and Bob's firing sequence, where Alice is associated with $1$ and Bob with $0$.  For example, the $(3/2)$-expansion of $1$ is
$$
0.10010100101\ldots
$$
This example also shows how $\beta$-expansions need not be unique if $\beta \not \in \mathbb{N}$.  By Corollary \ref{cor:expansion}, switching $1$ with $0$ gives another representation of $n/2$ in the same base!

We conclude with two questions.  What happens in a Galois {\em truel}, i.e., a three-way duel between equally terrible shots who are nonetheless fair-minded, optimally strategic, and are unwilling to deliberately miss?  It is not immediately clear what the fairest policy for turn-taking should be.  Finally, can Alice and Bob make their game fairer by imposing `less greedy' demands on the turn sequence which are still bounded computations run on the sequence so far?

\end{document}